\newcolumntype{L}[1]{>{\raggedright\let\newline\\\arraybackslash\hspace{0pt}}m{#1}}
\newcolumntype{C}[1]{>{\centering\let\newline\\\arraybackslash\hspace{0pt}}m{#1}}
\newcolumntype{R}[1]{>{\raggedleft\let\newline\\\arraybackslash\hspace{0pt}}m{#1}}
\newcommand{\zero}{\mathbf{0}}
\newcommand{\zeroblock}{
	\parbox[c][1cm]{1cm}{
		\makebox[10mm][c]{$\mathbf{0}$}
	}
}
\theoremstyle{plain}
\newtheorem{theorem}{Theorem}
\newtheorem{proposition}[theorem]{Proposition}
\theoremstyle{definition}
\newtheorem{example}[theorem]{Example}
\theoremstyle{remark}
\newtheorem{remark}[theorem]{Remark}
\newcommand{\domineering}{\textsc{Domineering }}
\newcommand{\Domineering}{\textsc{Domineering}}
\newcommand{\seqnum}[1]{\href{https://oeis.org/#1}{\underline{#1}}}
\begin{document}

\begin{center}
\vskip 1cm{\LARGE\bf Counting \domineering Positions}
\vskip 1cm
Svenja Huntemann \\
School of Mathematics and Statistics\\
Carleton University\\
Ottawa, ON\\
Canada\\
\href{mailto:svenja.huntemann@carleton.ca}{\tt svenja.huntemann@carleton.ca} \\
\ \\
Neil Anderson McKay \\
Department of Mathematics and Statistics \\
University of New Brunswick \\
Saint John, NB \\
Canada \\

\href{mailto:neil.mckay@unb.ca}{\tt neil.mckay@unb.ca} \\
\end{center}

\vskip .2 in

\begin{abstract}
\domineering is a two player game played on a checkerboard in which one player places dominoes vertically and the other places them horizontally. We give bivariate generating polynomials enumerating \domineering positions by the number of each player's pieces. 
We enumerate all positions, maximal positions, and positions where one player has no move. Using these polynomials we count the number of positions that occur during alternating play. Our method extends to enumerating positions from mid-game positions and we include an analysis of a tournament game.
\end{abstract}

\section{Introduction}

Combinatorial games are 2-player games with perfect information and no chance devices, such as \textsc{Chess} or \textsc{Go}. Many combinatorial games have, for a fixed starting position, a finite number of options and the game is guaranteed to end in a finite number of moves; in theory we could determine by computer which player would win if both players play perfectly. In practice game theorists and computer scientists have not determined the outcomes of games under perfect-play because of the complexity of the required search.

Enumeration of positions has been studied, directly or indirectly, for several combinatorial games. Papers on counting game positions consider the problem of enumerating specific types of positions --- \textsc{Go} end positions \cite{FarrSchmidt2008,Farr2003,TrompFarneback2007} and second-player win positions for specific lesser known games \cite{Hetyei2009,NowakowskiRLMM2013}. In the game \textsc{Node Kayles}, played on a graph, the two players alternate choosing vertices not adjacent to any previously chosen ones, thus forming an independent set. Therefore the independence polynomial of a graph is equal to the generating polynomial for positions of \textsc{Node Kayles} on that graph. Similarly, in the play of \textsc{Arc Kayles} the players form a matching. The enumeration of matchings has been studied for many graphs (see \cref{sec:relatedProblems}). 
In \emph{partizan} games, where players may have distinct options at some point during play, the convention is to call the two players \textbf{Left} (who uses bLue pieces) and \textbf{Right} (who uses Red pieces). In \textsc{Col}, which is played on a graph, a move for Left is to color a vertex blue, while a move for Right is to color a vertex red, and no two vertices of the same color may be adjacent. Oh and Lee \cite{OhLee2016} call such a position a bipartite independent vertex set and give the generating polynomial for grid graphs. Brown et al.\ \cite{BrownCHMMNS2019} give the generating polynomial, which they call the \textbf{polynomial profile}, for several games, including closed forms for \textsc{Col} and the game \textsc{Snort} (like \textsc{Col}, but two vertices of \emph{different} colors may not be adjacent) played on paths.

In this paper we consider the game \textbf{\Domineering}. This game is played on a checkerboard. The two players alternately place dominoes on adjacent empty squares; Left places vertically and Right places horizontally. The game ends when the player whose turn it is cannot place a piece; the player who cannot play loses --- this is the \textbf{normal play} convention. We will count all \domineering positions, as well as \domineering positions with certain properties, following the method used by Oh and Lee in \cite{Oh2017,OhLee2016}. In newer work Oh also considers monomer-dimer tilings in \cite{Oh2019}, thus implicitly counting \domineering positions, which overlaps with some of our work as discussed in \cref{sec:general}.

\Domineering, also called \textsc{Dominoes} or \textsc{Crosscram} in older work, was introduced by G\"oran Andersson and popularized by Martin Gardner in the 1970s. There has been significant interest in this game since then (see for example \cite{Berlekamp1988,BerlekampCG2004,Uiterwijk2016,West1996,Wolfe1993} and references therein). Some simplification techniques are known and can be combined with computer search for analysis, but no complete solution is available. It is known who wins on many rectangular boards up to size $11\times 11$ and some non-rectangular boards, as well as some values (essentially who has the advantage and by how much) and temperatures (essentially the urgency of moving). For the reader further interested in combinatorial game theory techniques we recommend Siegel's Combinatorial Game Theory \cite{Siegel2013}.

As a game of \domineering progresses, the board often naturally breaks into smaller \textbf{components}, see \cref{fig:position-to-sum}, where play in one component cannot affect the available moves in other components. A player on their turn has to decide which component they want to play in and make their move there. A position (board) is the \textbf{disjunctive sum} of its constituent components.

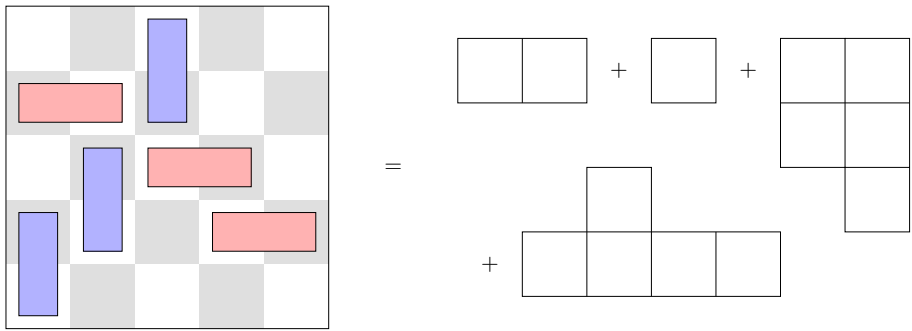
\begin{figure}[!ht]
\centering
\resizebox{\linewidth}{!}{
\begin{tikzpicture}
\tikzchessboard[4]
\Ldomino{1}{1}
\Ldomino{2}{3}
\Rdomino{3}{1}
\Ldomino{0}{0}
\Rdomino{0}{3}
\Rdomino{2}{2}
\node at (6,2.5) {$=$};
\draw (7,3.5)--(9,3.5)--(9,4.5)--(7,4.5)--cycle;
\draw (8,3.5)--(8,4.5);
\node at (9.5,4) {$+$};
\draw (10,3.5)--(11,3.5)--(11,4.5)--(10,4.5)--cycle;
\node at (11.5,4) {$+$};
\draw (14,2.5)--(12,2.5)--(12,4.5)--(13,4.5)--(13,1.5)--(14,1.5)--(14,4.5)--(13,4.5);
\draw (12,3.5)--(14,3.5);
\node at (7.5,1) {$+$};
\draw (8,0.5)--(12,0.5)--(12,1.5)--(8,1.5)--cycle;
\draw (9,0.5)--(9,2.5)--(10,2.5)--(10,0.5);
\draw (11,0.5)--(11,1.5);
\end{tikzpicture}}
\caption{During play a \domineering position may decompose into a disjunctive sum of \domineering positions that are not necessarily rectangular. The board on the left breaks into the disjunctive sum of the components on the right.}
\label{fig:position-to-sum}
\end{figure}

If the two players play in different components in the disjunctive sum a player may make consecutive moves in the same component. For example in \cref{fig:position-to-sum}, if it is Right's turn, he could choose to play in the third component, Left may choose the fourth, and Right choose the third again. Due to this, we are also interested in \domineering positions in which the two players do not necessarily alternate turns, hence we consider plays where the difference between the number of Left dominoes and Right dominoes may be larger than 1.

In \cref{sec:general} we will find the generating function for the number of positions on an $m\times n$ rectangular board which we denote by 
\[
D_{m,n} (x,y) = \sum d(a,b) x^a y^b
\]
where $d(a,b)$ is the number of positions with $a$ Left dominoes and $b$ Right dominoes. We then demonstrate in \cref{sec:nonrectangular} how to generalize the technique employed to find $D_{m,n}(x,y)$ to calculate the generating function for non-rectangular boards.

We are also particularly interested in enumerating positions at the end of the game. A \textbf{Left end} is a position in which Left can no longer play a piece, while Right potentially still has moves; a \textbf{Right end} is defined similarly. A \textbf{maximal position} is a position which is both a Left end and a Right end --- a position in which no player can place a domino. In \cref{sec:maximal} we will find the generating function for maximal positions, which we denote by $F_{m,n}(x,y)$. In \cref{sec:ends} we consider the generating function for Right ends; this is essentially the same as the generating function for Left ends.

Finally, in \cref{sec:FurtherWork} we will discuss some other questions related to the technique used within and the problem of enumerating \domineering positions.

\subsection{Related problems}\label{sec:relatedProblems}

The checkerboard corresponds to a grid graph by using a vertex for each square and connecting them with an edge if the two squares are horizontally or vertically adjacent. Playing \domineering on the checkerboard is then equivalent to forming a matching (also called an edge independent set) on the grid graph, with a distinction made between horizontal and vertical edges. 

If we make no distinction between Left and Right dominoes, that is, we enumerate the positions with a fixed number of dominoes, this is equivalent to enumerating the number of matchings in a grid graph or enumerating the monomer-dimer tilings of the chessboard (Propp \cite{Propp1999} gives a summary of connections to physics and chemistry). In particular, this means that $D_{m,n}(x,x)$ is the generating function for matchings in an $m\times n$ grid graph, and that $D_{m,n}(1,1)$ gives the total number of such matchings. The latter, also known as the Hosoya index of the grid graph, is known (for example see Ahrens \cite{Ahrens1981} or the OEIS \cite{oeis}
sequences \seqnum{A030186} for $2\times n$, \seqnum{A033506} for $3\times n$, and \seqnum{A028420} for $n\times n$). If a perfect matching exists in the $m\times n$ grid graph, then the number of perfect matchings is the leading coefficient of $D_{m,n}(x,x)$ as this is the number of maximum matchings.

Similarly, $F_{m,n}(x,x)$ gives the generating function for the maximal matchings in an $m\times n$ grid graph. The total number of maximal positions $F_{m,n}(1,1)$ are known for some values of $m$ and $n$ (OEIS sequences \seqnum{A000931} for $1\times n$, \seqnum{A286945} for $2\times n$, \seqnum{A288028} for $3\times n$, and \seqnum{A287595} for $n\times n$). 

\domineering belongs to the class of strong placement games, and thus one can assign to each game a simplicial complex representing the legal positions (see Faridi et al.\ \cite{FaridiHN2019a} for details). The coefficients of $D_{m,n}(x,x)$ are then the entries of the $f$-vector --- the vector counting the number of faces of a given dimension --- of this simplicial complex, while the coefficients of $F_{m,n}(x,x)$ give the number of maximal faces, called facets, of a fixed dimension. Our work in this paper was originally motivated by wanting to determine how many facets there are with a given number of vertices representing Left or Right moves. We solve this problem by finding $F_{m,n}(x,y)$ in \cref{sec:maximal}.

\section{Counting all \domineering positions}\label{sec:general}

Given $m$ and $n$, our goal is to count the number of \domineering positions that can occur as the result of play in an $m \times n$ rectangle. To distinguish this problem from counting specific types of \domineering positions later we refer to these as \emph{general} \domineering positions. 

We will find a generating function $D_{m,n}(x,y)$ such that
\[
D_{m,n} (x,y) = \sum d(a,b) x^a y^b
\]
where $d(a,b)$ is the number of \domineering positions with $a$ Left (vertical) dominoes and $b$ Right (horizontal) dominoes.

We will extract from the generating function information on the play positions, those that can be reached through alternating play, in \cref{sec:playPositions}.

In their paper, Oh \cite{Oh2019} considers an equivalent problem to the particular problem of counting general positions using a trivariate generating function, where the third variable counts the number of empty squares. The number of empty squares is not particularly relevant to \Domineering. However, this information can be deduced from the number of dominoes of either player and the board size. We use an equivalent method, but as our generating functions are slightly different, our recursions are different as well. We include the recursion here for completeness, particularly as we continue using the matrices in \cref{sec:nonrectangular}.

To count rectangular \domineering positions we consider the tilings of rectangles using tiles with edge labels, then count the tilings that correspond to positions. 

We use the tiles in \cref{fig:general-tiles}.

\begin{figure}[!ht]
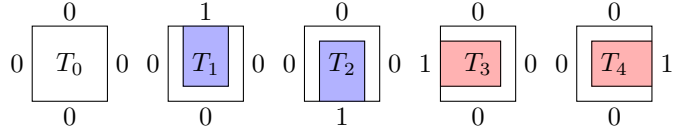

\begin{center}
\TO \TI \TII \TIII \TIV
\end{center}
\caption{The tiles used to count general \domineering positions}
\label{fig:general-tiles}
\end{figure}

For a rectangular tiling to form a \domineering position the tiling must satisfy the following conditions:
\begin{enumerate}
\item \textit{Adjacency condition}: All shared edges of adjacent tiles have the same label.
\item \textit{Boundary condition}: All boundary edges of the tiling have label 0.
\end{enumerate}

Together these conditions ensure that the tiling has no half-dominoes.

In turn, any \domineering position can be uniquely represented using such a tiling. See \cref{fig:general-position-tiling} for an example.

In the tiling of a \domineering position, such as in \cref{fig:general-position-tiling}, the number of Left (vertical) dominoes is equal to the number of $T_1$, and the number of Right (horizontal) dominoes is equal to the number of $T_3$.

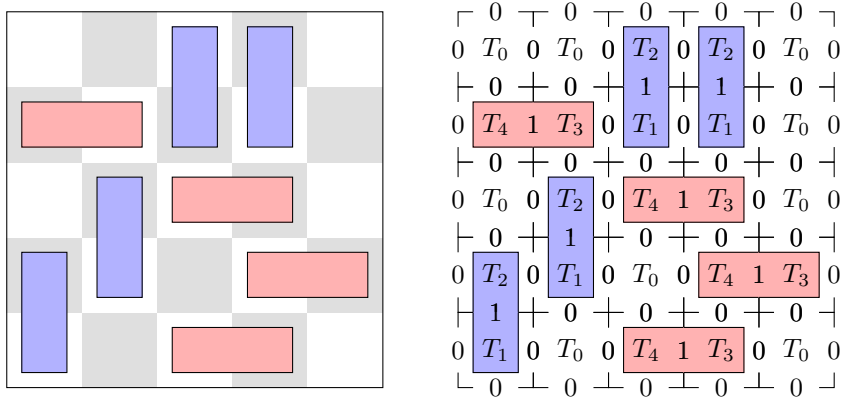
\begin{figure}[!ht]
\centering
\begin{tikzpicture}
\tikzchessboard[4]
\Ldomino{1}{1}
\Ldomino{2}{3}
\Rdomino{3}{1}
\Ldomino{0}{0}
\Rdomino{2}{0}
\Rdomino{0}{3}
\Ldomino{3}{3}
\Rdomino{2}{2}
\begin{scope}[shift={(6,0)}]
\TOboard{1}{0}
\TOboard{0}{2}
\TOboard{0}{4}
\TOboard{2}{1}
\TOboard{1}{4}
\TOboard{4}{0}
\TOboard{4}{2}
\TOboard{4}{3}
\TOboard{4}{4}
\TIIboard{0}{1}
\TIIboard{1}{2}
\TIIboard{2}{4}
\TIIboard{3}{4}
\TIboard{0}{0}
\TIboard{1}{1}
\TIboard{2}{3}
\TIboard{3}{3}
\TIVboard{2}{0}
\TIVboard{3}{1}
\TIVboard{2}{2}
\TIVboard{0}{3}
\TIIIboard{3}{0}
\TIIIboard{4}{1}
\TIIIboard{3}{2}
\TIIIboard{1}{3}
\end{scope}
\end{tikzpicture}
\caption{A \domineering position on a $5\times 5$ board and its equivalent tiling.}
\label{fig:general-position-tiling}
\end{figure}

We use the term \textbf{mosaic} for tilings that satisfy the adjacency condition, but not necessarily the boundary condition. A $1 \times q$ mosaic is called a \textbf{bar mosaic}, that is, a row of $q$ tiles.

We will find generating polynomials by first counting bar mosaics. We then stack these bars to count rectangular mosaics; in this step that we consider the adjacency condition for vertically adjacent tiles. Finally, we restrict to those tilings that also satisfy the boundary condition.

We will count mosaics instead of just \domineering tilings as a row or column of a \domineering rectangle is not necessarily a \domineering position itself. For example in \cref{fig:general-position-tiling} the second row from the bottom forms the bar mosaic shown in \cref{fig:general-bar-mosaic} --- it does not correspond to a \domineering position on its own as the leftmost two tiles are unpaired half-dominoes, but shared edges of horizontally adjacent tiles do have the same label.

\begin{figure}[!ht]
\centering\begin{tikzpicture}
\TOboard{2}{0}
\TIboard{1}{0}
\TIIboard{0}{0}
\TIIIboard{4}{0}
\TIVboard{3}{0}
\end{tikzpicture}
\caption{A row of a \domineering tiling is a bar mosaic, which satisfies the (horizontal) adjacency condition but not necessarily boundary condition.}
\label{fig:general-bar-mosaic}
\end{figure}
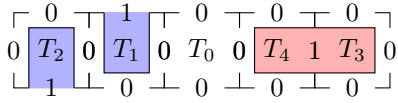

The enumeration of bar mosaics will be done using matrices, called \textbf{bar-state matrices}. We take care to construct these matrices so that we can count rectangular mosaics by matrix multiplication.  As we only want to count mosaics that satisfy the boundary condition in the end, we can make due with counting only bar mosaics with label 0 on the right end. The enumeration of the bar mosaics will be done recursively --- we use the enumeration of bar mosaics to enumerate longer bar mosaics --- so our matrices are recursively defined (using blocks). We use two bar-state matrices; the matrix to count bar mosaics of length $q$ with right label 0 and left label 0 is denoted by $G_{0,q}$ and the matrix to count bar mosaics of length $q$ with right label 0 and left label 1 is denoted by $G_{1,q}$. We use $G$ here for naming the matrices, standing for `general', to reserve $M$ for matrices in the subsequent sections of the paper, which include a discussion of maximal positions.

\begin{theorem}\label{thm:generalPositions}
The polynomial profile of \domineering on an $m\times n$ board is the (1,1) entry of $G_{0,n}^m$ where $G_{0,0} = \begin{bmatrix} 1\\ \end{bmatrix}$,
$G_{1,0} = \begin{bmatrix} 0\\ \end{bmatrix}$,

\[G_{0,q+1} =
\left[\begin{tabular}{R{1cm}R{1cm}}
$G_{0,q}$ $+G_{1,q}$ & $xG_{0,q}$ \\
$G_{0,q}$ & \zeroblock
\end{tabular}\right], \text{ and }
G_{1,q+1} =
\left[\begin{tabular}{R{1cm}R{1cm}}
$yG_{0,q}$ & \zeroblock \\
\zeroblock & \zeroblock
\end{tabular}\right].
\]

\end{theorem}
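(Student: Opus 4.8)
The plan is to set up the bar-state matrices so their entries are polynomials recording bar mosaics with prescribed boundary labels, then verify (i) the recursions for $G_{0,q}$ and $G_{1,q}$ by appending one tile to a bar, and (ii) that stacking bars corresponds to matrix multiplication, so that $(G_{0,n}^m)_{1,1}$ counts exactly the rectangular mosaics satisfying both the adjacency and boundary conditions.

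First I would make precise what the rows and columns of $G_{\ell,q}$ index. Each tile has a top edge label and a bottom edge label (both in $\{0,1\}$), and the key observation is that the five tiles in \cref{fig:general-tiles}, when read as a function of (left label, right label, top label, bottom label), are exactly the admissible local configurations: $T_0$ is the all-0 tile; $T_1$ and $T_2$ are the bottom half and top half of a vertical domino (so they carry a $1$ on one horizontal edge and contribute a factor $x$ on $T_1$, say); $T_3$ and $T_4$ are the right half and left half of a horizontal domino, where the $1$-labelled vertical edge is shared and $T_3$ contributes the factor $y$. A bar mosaic of length $q$ then has a well-defined \emph{bottom profile} and \emph{top profile} in $\{0,1\}^q$ recording the horizontal-edge labels along the bottom and top. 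I would let the index set of $G_{\ell,q}$ be these profiles (or an equivalent encoding of them via the block structure), so that the $(s,t)$ entry of $G_{\ell,q}$ is the generating polynomial $\sum x^{a}y^{b}$ over bar mosaics of length $q$ with left label $\ell$, right label $0$, bottom profile $s$, and top profile $t$, where $a$ is the number of $T_1$ tiles and $b$ the number of $T_3$ tiles. The base case $q=0$ is the empty bar: $G_{0,0}=[1]$ (left label $0$ is consistent with right label $0$) and $G_{1,0}=[0]$ (left label $1$ is impossible with no tiles and right label $0$).

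Next I would establish the recursion by appending a tile on the \emph{left} end of a length-$q$ bar to form a length-$(q+1)$ bar. Fix the right label to be $0$; the new left label $\ell'$ and the rest of the data are determined by which of the five tiles we prepend and by the old left label $\ell$ (which becomes the shared internal edge). Going through the five tiles: prepending $T_0$ keeps $\ell'=0$ and requires $\ell=0$; prepending $T_2$ (top half of a vertical domino, carrying a $1$ on its \emph{right} horizontal edge — wait, on whichever horizontal edge faces inward) forces $\ell=1$ and gives $\ell'=0$ — these two cases give the $G_{0,q}+G_{1,q}$ block; prepending $T_1$ carries a $1$ inward so requires $\ell=\dots$, contributes $x$, and this is the $xG_{0,q}$ block; prepending $T_4$ (left half of a horizontal domino) sets $\ell'=1$ with $\ell=0$, giving the $G_{0,q}$ block of $G_{1,q+1}$ up to the $y$; and prepending $T_3$ contributes the $y$ factor. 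The block layout in the statement, with the doubling of the index set at each step (the $2\times 2$ block form), is exactly the bookkeeping of the new bottom/top profile bit introduced by the new tile. The zero blocks record forbidden combinations (e.g.\ a right label $1$, or a left-label mismatch). I would check each of the (at most ten) tile-times-old-label cases against the stated block entries; this is the most error-prone but still routine step.

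The heart of the argument is then: stacking $m$ bars vertically, with the adjacency condition on horizontal interfaces, means the top profile of bar $i$ equals the bottom profile of bar $i+1$, which is precisely matrix multiplication of the $G_{0,n}$'s — \emph{provided} we only allow left label $0$ for every bar, which is why $G_{0,n}$ and not $G_{1,n}$ is used. Since the entries are indexed by profiles, $(G_{0,n}^m)_{s,t}$ is the generating polynomial for $m\times n$ mosaics with all-$0$ left and right boundaries, bottom profile $s$, and top profile $t$; the boundary condition additionally forces the bottom and top boundary profiles to be all-$0$, i.e.\ $s=t=\mathbf 0$, which is the $(1,1)$ position under the natural ordering of profiles (all-zero profile first, preserved by the block construction). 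Finally I would invoke the bijection asserted in the excerpt between \domineering positions and such tilings, together with the stated facts that the number of Left dominoes equals the number of $T_1$ tiles and the number of Right dominoes equals the number of $T_3$ tiles, to conclude that $(G_{0,n}^m)_{1,1}=D_{m,n}(x,y)$. \textbf{The main obstacle} I anticipate is purely notational: pinning down the exact correspondence between the recursive block indices and the profile vectors, and confirming that the all-$0$ profile stays in the top-left corner throughout the recursion so that "the $(1,1)$ entry" genuinely means "both boundaries all zero"; once that indexing is fixed, the tile-by-tile verification and the multiplication-equals-stacking step are mechanical.
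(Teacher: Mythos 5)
Your overall framework is exactly the paper's: entries of $G_{\ell,q}$ indexed by the top/bottom edge-label profiles (ordered with the all-zero profile first), a recursion obtained by conditioning on the leftmost tile, stacking of bars realized as matrix multiplication, and the $(1,1)$ entry imposing the all-zero top and bottom boundary. The gap is in the one step that actually constitutes the theorem: verifying the block entries of the recursion. You defer this as ``routine,'' and the portion you do sketch is incorrect under your own tile conventions. With $T_1$, $T_2$ the two halves of a vertical domino (label $1$ on the shared \emph{horizontal} edge) and $T_3$, $T_4$ the right and left halves of a horizontal domino (label $1$ on the shared \emph{vertical} edge), the tile $T_2$ has right label $0$ and bottom label $1$; it cannot ``force $\ell=1$'' and does not belong to block I at all --- it is the unique tile for block III (bottom bit $1$), giving the lower-left entry $G_{0,q}$ of $G_{0,q+1}$. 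The summand $G_{1,q}$ in block I of $G_{0,q+1}$ comes instead from $T_4$, whose right label is $1$, so the completion of the bar must have left label $1$. Likewise $G_{1,q+1}$ cannot begin with $T_4$ (its left label is $0$); the only admissible leftmost tile there is $T_3$, with left label $1$ and right label $0$, which yields $yG_{0,q}$ in block I and $\zero$ elsewhere --- your sketch has $T_3$ and $T_4$ playing each other's roles. You also leave the constraint for $T_1$ unresolved (``requires $\ell=\dots$''); since $T_1$'s right label is $0$ the completion is counted by $G_{0,q}$, giving $xG_{0,q}$ in block II.

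Because the statement being proved \emph{is} this list of block entries, the case check is the substance of the proof, not an afterthought; everything else in your outline (base case, multiplication-equals-stacking, the position/tiling bijection with $x$ for $T_1$ and $y$ for $T_3$, and the lexicographic ordering keeping the all-zero profile in position $1$) is right and coincides with the paper's argument. To close the gap, redo the five-tile analysis systematically: for each tile record its four edge labels, let its left label decide whether it can start a bar counted by $G_{0,q+1}$ or $G_{1,q+1}$, let its top/bottom labels select the block, and let its right label select $G_{0,q}$ versus $G_{1,q}$ for the completion; this reproduces the stated matrices and matches the paper's proof.
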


\begin{proof}
We will first prove that the bar-state matrices give the generating polynomials for bar mosaics.

In a bar-state matrix the rows and columns are indexed by binary strings of length $q$ and thus each matrix has size $2^q\times 2^q$. We order these strings lexicographically. Thus the strings of length 3 are ordered as
\[000,001,010,011,100,101,110,111.\]

An entry in a bar-state matrix is the generating polynomial of the bar whose top edge labels are the column index, and whose bottom edge labels are the row index; the right edge label is 0 and the left edge label corresponds to the matrix (the first subscript of $G$). The generating polynomial of the bar is the monomial $x^ay^b$ where $a$ is the number of $T_1$ in the bar and $b$ is the number of $T_3$ in the bar.

For $q=0$, the length is 0, it is only possible to satisfy the adjacency condition if both the starting and ending labels are the same. We thus have $G_{0,0} = \begin{bmatrix} 1 \end{bmatrix}$ and $G_{1,0} = \begin{bmatrix} 0 \end{bmatrix}$.

Our construction for larger $q$ uses blocks and we name them as in \cref{fig:4blocklabels} to avoid proliferating subscripts. We need to show that in each matrix the four blocks are as we claim. Our argument is recursive and follows by first considering the leftmost tile in each bar. The block considered determines the top label and bottom label of the leftmost tile in bar mosaics enumerated as in \cref{fig:4blocklabels}.

\begin{figure}[ht]
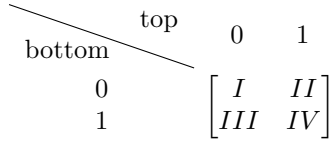

\[\begin{array}{c c} 
	\mbox{\diagbox[width=7em]{bottom}{top}} & \begin{array}{c c} 0\ &  \ \ \ 1 \\ \end{array} \\[2ex]
	\begin{array}{c c}
	0\\
	1
	\end{array}
	&
	\begin{bmatrix}
	I & II\\
	III & IV
	\end{bmatrix}
\end{array}\]
\caption{The block construction of the state-matrices for general \domineering positions.}
\label{fig:4blocklabels}
\end{figure}

For $q=1$, the bars we are counting consist of a single tile which must have right label 0 (or cannot have right label 1), that is, one of the four tiles $0$--$3$; you may wish to reference \cref{fig:general-tiles}. Thus between the bar-state matrices $G_{0,1}$ and $G_{1,1}$ we have four nonzero blocks --- determined by the top and bottom labels of tiles $0$--$3$. Recall that each block has a corresponding top and bottom label of the leftmost tile.
Thus 
\[G_{0,1}=\begin{bmatrix}
1 & x\\ 1 & 0
\end{bmatrix}
\mbox{ and }
G_{1,1}=\begin{bmatrix}
y & 0\\ 0 & 0
\end{bmatrix},\]
which shows that the recursion is correct for the first step.

The bar-state matrix $G_{0,q+1}$ enumerates mosaics where the leftmost tile has left label 0 (and right label 0). We consider possible leftmost tiles which are tiles $0$--$2$ and $4$, as these are the only tiles with left label 0.
Of these four tiles, those tiles that meet the conditions for block I are tile $0$ and tile $4$. As the right label of tile $0$ is 0, the possible completions of our bar when the leftmost tile is tile $0$ are given by $G_{0,q}$. As the right label of tile $4$ is 1, the possible completions of our bar are given by $G_{1,q}$. So block I is $G_{0,q}+G_{1,q}$.

The conditions for block II (that the leftmost tile has top label 1 and bottom label 0) require the leftmost tile in the bar mosaic to be $T_1$ and the entry is thus $xG_{0,q}$ because the right label of tile $1$ is 0 and $G_{0,q}$ counts the bar mosaics with left label 0 and right label 0 (recall that for each $T_1$ we have an $x$).

The conditions for block III force $T_2$ as the leftmost tile, giving the entry $G_{0,q}$. For block IV, we would need a tile with bottom and top label $1$ but no tiles have such labels and so our entry is $\zero$.

In $G_{1,q+1}$, we have a left label of 1. As our recursion starts by consider possible left tiles, we need only consider tile $3$, as this is the only tile with 1 as a left label.

Thus the only block for which tile $3$ has appropriate top and bottom labels is block I. Since $T_3$ is forced and its left label is 0, the possible completions of the bar are given by $G_{0,q}$ and the entry in block one is $yG_{0,q}$. All other blocks, having no possible leftmost tiles, count no mosaics and thus their blocks are $\zero$.

We turn our attention to rectangular $m \times n$ tilings and in particular those that satisfy the left and right boundary conditions, so we will only consider the bar mosaics of length $n$ with left label 0 in addition to having the right label 0; these are counted by  $G_{0,n}$.

First consider $2\times n$ mosaics that satisfy the adjacency condition.
Such a mosaic consists of two bar mosaics where the string of bottom labels of one mosaic matches the string of top labels of the other. Suppose $A$ is the bar-state matrix for the top bar and $B$ is the bar-state matrix for the bottom bar. To count all the $2\times n$ mosaics that satisfy the adjacency condition we need to consider all $2^n$ strings of matched labels along the stacked bar mosaics. That is, we take columns in $A$ and rows in $B$. Since the other labels of the bar mosaics have to match, we take the dot product of the two vectors to get the polynomial for this mosaic. Thus the matrix for the $2\times n$ mosaic is $BA$.

To get mosaics that also satisfy the boundary conditions and thus correspond to \domineering positions we can start with bar mosaics that satisfy the left and right boundary conditions; in our case $A=B=G_{0,n}$, so the state matrix for $2 \times n$ mosaics where the left and right boundary conditions are also satisfied is $G_{0,n}^2$.

For larger mosaics this works the same way; we stack more bars and find that the matrix for a $m\times n$ mosaic is $G_{0,n}^m$.

Fixing the string of top labels and of bottom labels for the $2\times n$ mosaic corresponds to specifying an entry in the matrix.
Finally, to satisfy the boundary condition on top and bottom we need to restrict to all top and bottom labels being 0. This means we take the entry in the top left corner, entry $(1,1)$.
\end{proof}

\begin{example}\label{ex:4x3}
Now we compute the number of general positions on a $4\times 3$ \domineering board. We need the bar-state matrix for bar mosaics of length $3$ satisfying the left and right boundary conditions, $G_{0,3}$, which we find recursively to be as follows:

\[G_{0,1}=\begin{bmatrix}
1 & x\\ 1 & 0
\end{bmatrix}\qquad
G_{1,1}=\begin{bmatrix}
y & 0\\ 0 & 0
\end{bmatrix}\]
\[G_{0,2}=\left[\begin{array}{cc:cc}
y+1 & x & x & x^2\\
1 & 0 & x & 0\\
\hdashline
1 & x & 0 & 0\\
1 & 0 & 0 & 0
\end{array}\right]\qquad
G_{1,2}=\left[\begin{array}{cc:cc}
y & xy & 0 & 0\\
y & 0 & 0 & 0\\
\hdashline
0 & 0 & 0 & 0\\
0 & 0 & 0 & 0
\end{array}\right]\]
\[G_{0,3}=\left[\begin{array}{cccc:cccc}
2y+1 & xy+x & x & x^2 & xy+x & x^2 & x^2 & x^3\\
y+1 & 0 & x & 0 & x & 0 & x^2 & 0\\
1 & x & 0 & 0 & x & x^2 & 0 & 0\\
1 & 0 & 0 & 0 & x & 0 & 0 & 0\\
\hdashline
y+1 & x & x & x^2 & 0 & 0 & 0 & 0\\
1 & 0 & x & 0 & 0 & 0 & 0 & 0\\
1 & x & 0 & 0 & 0 & 0 & 0 & 0\\
1 & 0 & 0 & 0 & 0 & 0 & 0 & 0
\end{array}\right]
\]

Calculating $G_{0,3}^4$ gives the state matrix for $4 \times 3$ boards and taking the $(1,1)$ entry, gives the generating function for $4 \times 3$ \domineering positions as follows:
\begin{align*}
D_{4,3}(x,y) &= x^6 + 9x^5 + 6x^4y^2 + 20x^4y + 30x^4 + 46x^3y^2 + 84x^3y + 45x^3\\
&\qquad + 4x^2y^4 + 24x^2y^3 + 100x^2y^2 + 100x^2y + 30x^2+24xy^4\\
&\qquad + 72xy^3 + 90xy^2 + 48xy + 9x + 16y^4 + 32y^3 + 24y^2+ 8y + 1.
\end{align*}

For example, on a $4\times 3$ \domineering board there are $90$ positions with $1$ vertical domino and $2$ horizontal dominoes as shown by the term $90xy^2$ and there is $1$ position with $6$ vertical dominoes as shown by the term $x^6$.

\end{example}

\subsection{Play positions}\label{sec:playPositions}

The polynomials we have found count all legal \domineering positions. As mentioned previously, we do not assume alternating play. Positions which may occur during alternating play are called \textbf{play positions}. To enumerate play positions we restrict $D_{m,n}(x,y)$ to include only terms where the difference in the powers of $x$ and $y$ is at most $1$. 

\begin{example}
Using \cref{ex:4x3} we can see that the polynomial for the play positions on a $4\times 3$ board is
\[46x^3y^2+ 24x^2y^3+ 100x^2y^2 + 100x^2y+ 90xy^2 + 48xy + 9x + 8y + 1.\]

Setting $x=y=1$ in this polynomial we get the total number of play positions, which in this case is $426$.
\end{example}

The number of play positions on an $n \times n$ board is in the OEIS as \seqnum{A332714}. The terms up to $n=10$ are given in \cref{tab:number-of-play-positions}. We also include the ratio of all positions that are play positions, truncated to $5$ decimals. Recall that the number of all positions is the number of matchings in a grid graph, which is given in the OEIS as \seqnum{A028420}.

\begin{table}[ht]

\centering

$\begin{array}{r||r||c}
n & \mbox{Number of play positions} & \mbox{Ratio to all positions}\\\hline\hline
1 & 1 & 1 \\\hline
2 & 5 & 0.71428 \\\hline
3 & 75 & 0.57251 \\\hline
4 & $4,632$ & 0.46264 \\\hline
5 & $1,076,492$ & 0.38299 \\\hline
6 & $963,182,263$	& 0.32222 \\\hline
7 & $3,317,770,165,381$ & 0.27774 \\\hline
8 & $43,809,083,383,524,391$ & 0.24367 \\\hline
9 & $2,209,112,327,971,366,587,064$ & 0.21689 \\\hline
10 & $424,273,291,301,040,427,702,718,109$ & 0.19532\\
\end{array}$
\caption{The number of play positions and ratio of play positions to all positions on small square boards.}
\label{tab:number-of-play-positions}
\end{table}

The ratios computed in \cref{tab:number-of-play-positions} give a sense of the cost of considering all plays from a position, as is standard in combinatorial game theory, compared to considering only plays reachable in alternating play, which is common in practice.

We expect that the ratio of play positions decreases as the board size increases because on larger boards there are an increasing number of ways to have a board with the number of dominoes for the two players differing by more than one. The ratios apparently do not follow an arithmetic or geometric sequence. 

We expect the ratios of play positions to general positions to approach $0$. While it is interesting to consider how this ratio changes as the board size grows, it is also potentially interesting to observe how this ratio changes during play.

\section{Non-rectangular boards}\label{sec:nonrectangular}

The techniques we use to find the generating polynomials of rectangular boards can be used to find the generating polynomials of non-rectangular boards as well.

We think of a non-rectangular board as being a rectangular board with missing squares and treat the missing squares as squares forced to be empty. For example, the board on the left in \cref{fig:non-rectangular-board} we think of as being contained in a $2\times 3$ board with forced empty squares in the top left and top middle.

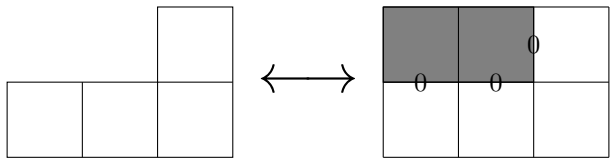
\begin{figure}[!ht]
\centering
\begin{tikzpicture}
\draw (0,0)--(3,0)--(3,2)--(2,2)--(2,1)--(0,1)--cycle;
\draw (1,0)--(1,1);
\draw (2,0)--(2,1)--(3,1);

\node at (4,1) {\huge $\longleftrightarrow$};
\begin{scope}[shift={(5,0)}]
\filldraw[fill=gray] (0,1)--(2,1)--(2,2)--(0,2)--cycle;
\foreach \x in {0,1,2,3}{
\draw (\x,0)--(\x,2);
}
\foreach \y in {0,1,2}{
\draw (0,\y)--(3,\y);
}
\node at (0.5,1) {$0$};
\node at (1.5,1) {$0$};
\node at (2,1.5) {$0$};
\end{scope}
\end{tikzpicture}
\caption{The equivalence of a non-rectangular board and a rectangular board with forced empty squares.}
\label{fig:non-rectangular-board}
\end{figure}

We will create a (bar-state) matrix $R_i$ for each row, then multiply these matrices to get the state matrix for the entire mosaic. For the board in \cref{fig:non-rectangular-board} the matrix for row $1$ is $R_1=G_{0,1}$. For row 2 we will use $G_{0,3}$, but restrict those columns which have 0 in the first and second position of the top label. We denote this as $R_2=\left.G_{0,3}\right|^{00\_}$. For restrictions on bottom labels we will use a subscript. \cref{ex:3x4-nonrectangular-board} shows this procedure, but before that we consider rows with missing squares in the middle as in \cref{ex:row-with-missing-square}. If a row contains an missing square in the middle, the Kronecker product of the bar-state matrices of the shorter strips is the bar-state matrix for the row. The \textbf{Kronecker product} of two matrices $A$ and $B$ is
\[A\otimes B=\begin{bmatrix}
a_{11}B & \cdots & a_{1n}B\\
\vdots & \ddots & \vdots\\
a_{m1}B & \cdots & a_{mn}B
\end{bmatrix}.\]

If missing squares in a row break it into three or more parts we use the Kronecker product multiple times.

The tiling (and the play) in the left part of the broken row is independent of the tiling (and play) in the right part. The state-matrix for the whole row is a block matrix where the blocks are determined by possible labels of the first part of the row, while the order of the second part in each block is the same and determined by the possible top and bottom labels in that part. Thus, taking the Kronecker product of the matrix for the first strip with the matrix for the second strip is exactly what we need here.

\begin{example}\label{ex:row-with-missing-square}
Consider the strip 
\begin{center}
\begin{tikzpicture}
\filldraw[fill=gray] (1,0)--(1,1)--(2,1)--(2,0)--cycle;
\foreach \x in {0,1,2,3,4}{
\draw (\x,0)--(\x,1);
}
\foreach \y in {0,1}{
\draw (0,\y)--(4,\y);
}
\node at (3.5,0) {$0$};
\end{tikzpicture}
\end{center}
which is the center row of the board in \cref{fig:exampleBoard}.

The matrix for this row is 
\begin{align*}
G_{0,1}\otimes G_{0,2}|_{\_0} &=
\begin{bmatrix}
1 & x\\
1 & 0
\end{bmatrix}\otimes
\begin{bmatrix}
y+1 & x & x & x^2\\
1 & x & 0 & 0
\end{bmatrix}\\
&=
\left[\begin{array}{cccc:cccc}
y+1 & x & x & x^2 & xy+x & x^2 & x^2 & x^3\\
1 & x & 0 & 0 & x & x^2 & 0 & 0\\
\hdashline
y+1 & x & x & x^2 & 0 & 0 & 0 & 0\\
1 & x & 0 & 0 & 0 & 0 & 0 & 0
\end{array}\right]
.
\end{align*}

\end{example}

Finally, we multiply the matrices for each row such that the bottom row matrix is the left-most one, while the top row matrix is the right-most one, allowing us to match the top/column label of each row with the bottom/row label of the row above.

\begin{example}\label{ex:3x4-nonrectangular-board}
Consider the board in \cref{fig:exampleBoard}.

\begin{figure}[!ht]
\begin{center}
\begin{tikzpicture}
\filldraw[fill=gray] (1,1)--(1,2)--(2,2)--(2,1)--cycle;
\filldraw[fill=gray] (3,0)--(3,1)--(4,1)--(4,0)--cycle;
\foreach \x in {0,1,2,3,4}{
\draw (\x,0)--(\x,3);
}
\foreach \y in {0,1,2,3}{
\draw (0,\y)--(4,\y);
}
\end{tikzpicture}
\end{center}
\caption{An example non-rectangular board}
\label{fig:exampleBoard}
\end{figure}
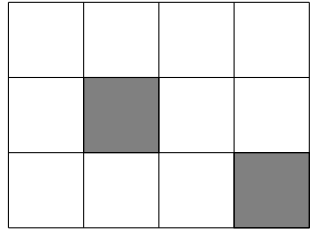

For row 1 we have
\[R_1=G_{0,4}|_{\_0\_\_};
\]
for row 2, as in \cref{ex:row-with-missing-square},
\[R_2=G_{0,1}\otimes G_{0,2}|_{\_0}; \]
and for row 3
\[R_3=G_{0,3}|^{\_0\_}.\]

The matrix for the entire mosaic is then $G=R_3\times R_2\times R_1$. To apply the restriction of having only $0$s for top and bottom labels we take only the $(1,1)$ entry of the matrix. Thus we get the generating polynomial
\begin{align*}
D&=2x^3y^2 + 8x^3y + 4x^3 + 12x^2y^2 + 22x^2y + 8x^2 + xy^4 + 10xy^3 + 26xy^2\\
&\qquad + 21xy + 5x + 2y^4 + 9y^3 + 12y^2 + 6y + 1.
\end{align*}
\end{example}

Using this technique for non-rectangular boards, we are thus able to find the generating polynomial counting positions that can be reached from a partially played board. We will do so in \cref{ex:GONC}, where we will also use the following simplification for disjunctive sums.

\begin{proposition}
Suppose $G$ is the game of \domineering on the board $B_1$ and $H$ is \domineering on $B_2$. Then the generating polynomial of $G+H$ is the product of the generating polynomials of $G$ and $H$.
\end{proposition}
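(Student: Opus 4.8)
The plan is to exhibit a bijection between positions of $G+H$ and ordered pairs $(P_1,P_2)$ with $P_1$ a position of $G$ and $P_2$ a position of $H$, and then to check that this bijection respects the bigrading by (number of Left dominoes, number of Right dominoes). First I would recall that the board underlying $G+H$ is the disjoint union $B_1\sqcup B_2$, in which no square of $B_1$ is orthogonally adjacent to any square of $B_2$ --- this separation is exactly what the disjunctive sum encodes. Consequently every domino in a position of $G+H$ occupies two adjacent squares that lie entirely in $B_1$ or entirely in $B_2$, so a position $P$ of $G+H$ decomposes unambiguously as $P=P_1\cup P_2$, where $P_i$ is the set of dominoes of $P$ lying in $B_i$. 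Since the legality conditions for \domineering are local (dominoes are pairwise non-overlapping and each covers two empty adjacent cells), $P_1$ is a legal position of $G$ and $P_2$ a legal position of $H$; conversely, for any legal $P_1$ on $B_1$ and legal $P_2$ on $B_2$, the union $P_1\cup P_2$ is a legal position of $G+H$. This gives the bijection.

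Next I would track exponents. Writing $a(Q)$ and $b(Q)$ for the number of Left (vertical) and Right (horizontal) dominoes in a position $Q$, the decomposition $P=P_1\cup P_2$ partitions the Left dominoes of $P$ into those in $B_1$ and those in $B_2$, and likewise the Right dominoes, so $a(P)=a(P_1)+a(P_2)$ and $b(P)=b(P_1)+b(P_2)$. Hence the monomial contributed by $P$ factors as $x^{a(P)}y^{b(P)}=\bigl(x^{a(P_1)}y^{b(P_1)}\bigr)\bigl(x^{a(P_2)}y^{b(P_2)}\bigr)$. Summing over all positions of $G+H$ and applying the bijection,
\[
\sum_{P} x^{a(P)}y^{b(P)} \;=\; \sum_{P_1}\sum_{P_2} x^{a(P_1)}y^{b(P_1)}\,x^{a(P_2)}y^{b(P_2)} \;=\; \Bigl(\sum_{P_1} x^{a(P_1)}y^{b(P_1)}\Bigr)\Bigl(\sum_{P_2} x^{a(P_2)}y^{b(P_2)}\Bigr),
\]
which is precisely the product of the generating polynomials of $G$ and $H$.

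The only step needing any care is the assertion that no domino of a position of $G+H$ straddles the two boards, and this is immediate from the definition of the disjunctive sum (the components are placed with no cell of one adjacent to a cell of the other), so I do not anticipate a genuine obstacle. An alternative, more computational route would use the machinery of \cref{sec:nonrectangular}: model $B_1$ and $B_2$ as one non-rectangular board with the two blocks separated by a column of forced-empty squares, note that each row matrix then factors as a Kronecker product of a row matrix of $B_1$ with a row matrix of $B_2$, and invoke $(A\otimes B)(C\otimes D)=(AC)\otimes(BD)$ together with the fact that the $(1,1)$ entry of a Kronecker product is the product of the $(1,1)$ entries. I would nonetheless present the bijective argument as the main proof, since it is cleaner and does not rely on the particular tiling encoding.
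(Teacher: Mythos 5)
Your proof is correct and follows essentially the same approach as the paper: both arguments rest on decomposing a position of $G+H$ uniquely into its parts on $B_1$ and $B_2$ and noting that the piece counts add, the paper phrasing this as the coefficient convolution $c_{u,v}=\sum a_{i,j}b_{u-i,v-j}$ while you phrase it as a bijection with the monomials factoring. The extra care you take about dominoes not straddling the two boards, and the alternative Kronecker-product route, are fine but not needed beyond what the paper already assumes about disjunctive sums.
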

\begin{proof}
Let the coefficient of $x^iy^j$ in the generating polynomial of $G$ be $a_{i,j}$, of $H$ be $b_{i,j}$, and of $G+H$ be $c_{i,j}$.

Consider a position in $G+H$ with $u$ Left pieces and $v$ Right pieces. Say $i$ Left pieces and $j$ Right pieces of these have been played in $G$ and the rest in $H$. There are $a_{i,j}b_{u-i,v-j}$ such positions, and thus \[c_{u,v}=\sum_{\substack{0\leq i\leq u\\ 0\leq j\leq v}} a_{i,j}b_{u-i,v-j}.\] Therefore, the generating polynomial of $G+H$ is the product of those of $G$ and $H$.
\end{proof}

This proposition again emphasizes that it makes sense to enumerate all positions, not just play positions, due to disjunctive sums allowing a difference in the number of pieces larger than 1. 

\begin{example}\label{ex:GONC}
The position in \cref{fig:GONCgame} occurred partway through Game 1 of the finals of the \domineering tournament at the 1994 workshop ``Games of No Chance'' at MSRI (see the tournament report by West \cite[Fig.\ 3]{West1996}). We will determine the polynomial for all play positions that would have to be analyzed from this point on. 

\begin{figure}[!ht]
\begin{center}
\begin{tikzpicture}
\tikzchessboard[7]
\LdominoDark{1}{0}
\LdominoDark{1}{2}
\LdominoDark{1}{4}
\LdominoDark{3}{2}
\LdominoDark{6}{2}
\LdominoDark{6}{6}
\RdominoDark{0}{6}
\RdominoDark{2}{1}
\RdominoDark{4}{4}
\RdominoDark{4}{6}
\RdominoDark{6}{1}
\draw[color=red] (0.1,7.1)--(0.1,7.9)--(5.9,7.9)--(5.9,7.1)--(3.9,7.1)--(3.9,5.9)--(7.1,5.9)--(7.1,7.9)--(7.9,7.9)--(7.9,2.1)--(7.1,2.1)--(7.1,4.1)--(6.1,4.1)--(6.1,5.1)--(3.9,5.1)--(3.9,4.1)--(2.9,4.1)--(2.9,2.1)--(2.1,2.1)--(2.1,7.1)--(0.1,7.1);
\draw[color=blue] (2.1,0.1)--(2.1,0.9)--(4.1,0.9)--(4.1,3.9)--(5.9,3.9)--(5.9,0.9)--(7.9,0.9)--(7.9,0.1)--(2.1,0.1);
\draw[color=green!70!black] (0.1,0.1)--(0.1,5.9)--(0.9,5.9)--(0.9,0.1)--(0.1,0.1);
\end{tikzpicture}
\end{center}
\caption{A board position from Game 1 of the 1994 tournament finals with disjunctive components highlighted.}
\label{fig:GONCgame}
\end{figure}
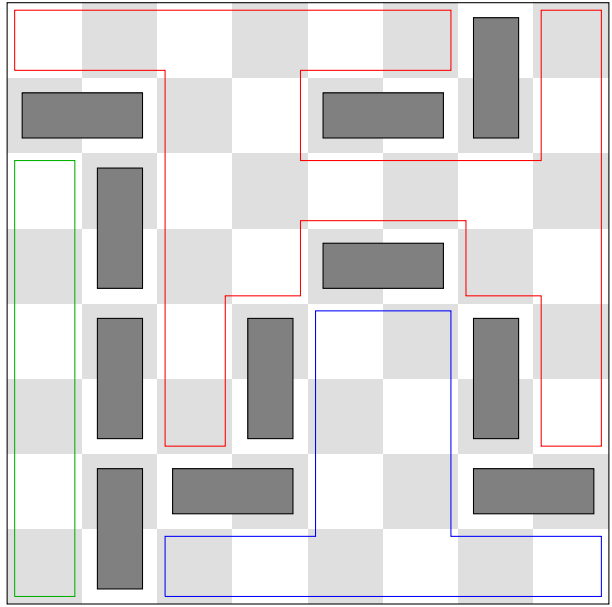

We start by finding the generating polynomials for general positions of each of the three disjunctive sum component as we have done in \cref{ex:3x4-nonrectangular-board}.

The ``bottom'' component, bordered in blue, as a mosaic has the matrix
\[G_{0,6}|^{00\_\,\_00}\times G_{0,2}^3\]
(recall that we multiply the matrices for each row from the bottom up). Thus the generating polynomial, the $(1,1)$ entry of this matrix, is \begin{align*}
&x^4y^2 + 2x^4y + x^4 + 10x^3y^2 + 16x^3y + 6x^3 + 3x^2y^4 + 24x^2y^3 + 58x^2y^2\\
&\qquad + 46x^2y + 11x^2 + 8xy^4 + 42xy^3 + 62xy^2 + 34xy + 6x + y^6 + 9y^5\\
&\qquad + 26y^4 + 35y^3 + 24y^2 + 8y +1.
\end{align*}

The ``top'' component, bordered in red, has the matrix 
\begin{align*}
&(G_{0,1}\otimes G_{0,1})\times( G_{0,1}\otimes G_{0,1})\times( G_{0,2}|_{\_0}\otimes G_{0,2}|_{0\_})\times(G_{0,6}|_{\_\,\_00\_\,\_}^{\_\,\_000\_})\\
&\quad\times(G_{0,2}\otimes G_{0,1})\times(G_{0,6}|_{00\_\,\_00}\otimes G_{0,1}).
\end{align*}

The generating polynomial is 
\begin{align*}
&x^9y^3 + 3x^9y^2 + 3x^9y + x^9 + 23x^8y^3 + 62x^8y^2 + 55x^8y + 16x^8 + 4x^7y^5\\
&\quad + 43x^7y^4 + 291x^7y^3 + 555x^7y^2 + 401x^7y + 98x^7 + 60x^6y^5 + 539x^6y^4\\
&\quad + 1874x^6y^3 + 2564x^6y^2 + 1469x^6y + 295x^6 + 4x^5y^7 + 71x^5y^6 + 650x^5y^5\\
&\quad + 2908x^5y^4 + 6250x^5y^3 + 6303x^5y^2 + 2885x^5y +481x^5 + 34x^4y^7\\
&\quad + 467x^4y^6 + 2590x^4y^5 + 7406x^4y^4 + 11148x^4y^3 + 8637x^4y^2 + 3218x^4y\\
&\quad + 452x^4 + x^3y^9 + 20x^3y^8 + 222x^3y^7 + 1421x^3y^6 + 4991x^3y^5\\
&\quad + 9909x^3y^4 + 11109x^3y^3 + 6812x^3y^2 + 2098x^3y + 251x^3 + 3x^2y^9 + 55x^2y^8\\
&\quad + 437x^2y^7 + 1909x^2y^6 + 4790x^2y^5 + 7083x^2y^4 + 6172x^2y^3 + 3063x^2y^2\\
&\quad + 789x^2y + 81x^2 + 3xy^9 + 50xy^8 + 330xy^7 + 1123xy^6 + 2181xy^5\\
&\quad + 2533xy^4 + 1774xy^3 + 726xy^2 + 158xy + 14x + y^9 + 15y^8 + 85y^7 + 237y^6\\
&\quad + 373y^5 + 353y^4 + 204y^3 + 70y^2 + 13y + 1.
\end{align*}

Finally, the component on the left, bordered in green, is simply a $6\times 1$ board, thus having matrix $G_{0,1}^6$ and generating function $D_{1,6}(x,y)=x^3 + 6x^2 + 5x + 1$.

Multiplying all three generating polynomials we get the generating polynomial for the entire position. Restricting to only play positions this is
\begin{align*}
&17x^{10}y^{11} + 410x^{10}y^{10} + 7690x^{10}y^9 + 6769x^9y^{10} + 76829x^9y^9 + 532379x^9y^8 \\
&\quad +  436560x^8y^9 + 2217847x^8y^8 + 7453953x^8y^7 + 6030494x^7y^8\\
&\quad + 16049771x^7y^7 + 29420257x^7y^6 + 23698832x^6y^7 + 36239078x^6y^6\\
&\quad + 38789964x^6y^5 + 31354701x^5y^6 + 29013063x^5y^5 +18784523x^5y^4\\
&\quad + 15287335x^4y^5 + 8768628x^4y^4 + 3451191x^4y^3 +2830886x^3y^4\\
&\quad + 1004132x^3y^3 + 232953x^3y^2  + 192647x^2y^3 + 40779x^2y^2 + 5086x^2y\\
&\quad + 4240xy^2 + 487xy + 25x + 21y + 1.
\end{align*}

In this polynomial we see from the small degree terms the number of possible paths of the game from this position. The high degree terms give some sense of the state at the end of the game provided play continues in such a way that the game lasts as long as possible. In this game Left played first; we see that Left (vertical) has played 6 times, whereas Right has played 5 times, and thus it is Right's turn from this position. Right has $21$ possible moves. Right would likely have been happy to see terms in the polynomial where the degree of $y$ is greater than the degree of $x$ as these indicate the possibility of Right winning; of note then is the term $17x^{10}y^{11}$ with a relatively small coefficient. The actual winner of this game was Left.
\end{example}

Further, by setting $x=y$ in all matrices, we are also able to count the number of matchings of a subgraph of a grid using this technique.

\section{Counting maximal \domineering positions}\label{sec:maximal}
	
Recall that a maximal \domineering positions is one in which neither player has any available moves. In this section we count maximal \domineering positions. We find the generating function
\[F_{m,n}(x,y)=\sum f(a,b)x^ay^b\]
where the board has size $m\times n$ and $f(a,b)$ is the number of maximal \domineering positions with $a$ Left (vertical) dominoes and $b$ Right (horizontal) dominoes. We use $F$ for the generating polynomial of the maximal positions as these correspond to the \textit{facets} (maximal faces) of the simplicial complex representing the position.

We will use the same technique as for the general \domineering positions of counting bar mosaics, multiplying the matrices, and then restricting to tilings, with only a few small differences. The tiles will be different so that with only using adjacency and boundary conditions we can force the tiling to be equivalent to a maximal position, i.e., no two empty spaces may be adjacent, and in turn a position corresponds to a unique tiling. The possible tiles in this case are given in \cref{fig:maximal-tiles}.
\begin{figure}[!ht]
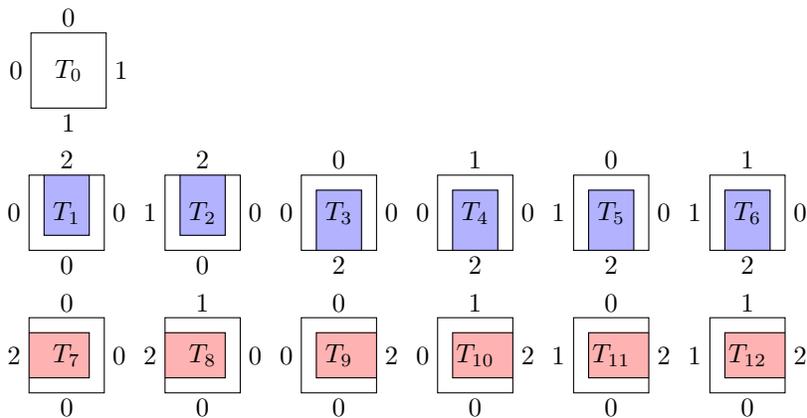

\begin{center}
\MaxTO\hspace{9cm}

\MaxTI \MaxTII \MaxTIII \MaxTIV \MaxTV \MaxTVI

\MaxTVII \MaxTVIII \MaxTIX \MaxTX \MaxTXI \MaxTXII
\end{center}
\caption{The tiles used to count maximal \domineering positions.}
\label{fig:maximal-tiles}
\end{figure}

For a rectangular tiling to form a maximal \domineering position the tiling must satisfy the following conditions:
\begin{enumerate}
\item \textit{Adjacency condition}: All shared edges of adjacent tiles have the same label.
\item \textit{Boundary condition}: The left and top boundary edges of the tiling have label 0; the right and bottom boundary edges have label 0 or 1.
\end{enumerate}

In turn, any maximal \domineering position can be uniquely represented using a such a tiling. To decide between the different tiles representing domino halves one simply has to check for empty squares above and to the left. As an example, see the position and equivalent tiling in \cref{fig:maximal-position-tiling}.

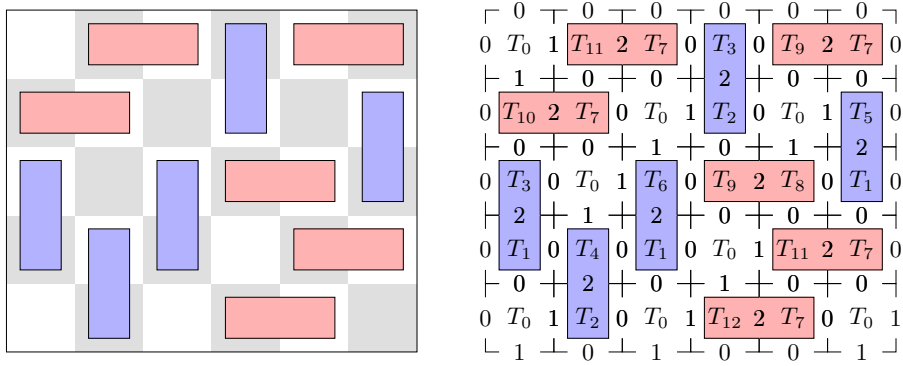
\begin{figure}[!ht]
\centering
\resizebox{\linewidth}{!}{
\begin{tikzpicture}
\foreach \x in {0,...,5} \foreach \y in {0,...,4}
	{
		\pgfmathparse{mod(\x+\y+6,2) ? "gray!25" : "white"}
		\edef\colour{\pgfmathresult}
		\path[fill=\colour] (\x,\y) rectangle ++ (1,1);
	}
	\draw (0,0)--(0,5)--(6,5)--(6,0)--cycle;
	
\Ldomino{0}{1}
\Ldomino{1}{0}
\Ldomino{2}{1}
\Ldomino{3}{3}
\Ldomino{5}{2}
\Rdomino{3}{0}
\Rdomino{4}{1}
\Rdomino{3}{2}
\Rdomino{0}{3}
\Rdomino{1}{4}
\Rdomino{4}{4}
\begin{scope}[shift={(7,0)}]
\MaxTOboard{0}{0}
\MaxTOboard{0}{4}
\MaxTOboard{1}{2}
\MaxTOboard{2}{0}
\MaxTOboard{2}{3}
\MaxTOboard{3}{1}
\MaxTOboard{4}{3}
\MaxTOboard{5}{0}

\MaxTIboard{0}{1}
\MaxTIboard{2}{1}
\MaxTIboard{5}{2}
\MaxTIIboard{1}{0}
\MaxTIIboard{3}{3}

\MaxTVIIboard{1}{3}
\MaxTVIIboard{2}{4}
\MaxTVIIboard{4}{0}
\MaxTVIIboard{5}{1}
\MaxTVIIboard{5}{4}
\MaxTVIIIboard{4}{2}

\MaxTIIIboard{0}{2}
\MaxTIIIboard{3}{4}
\MaxTIVboard{1}{1}
\MaxTVboard{5}{3}
\MaxTVIboard{2}{2}

\MaxTIXboard{3}{2}
\MaxTIXboard{4}{4}
\MaxTXboard{0}{3}
\MaxTXIboard{1}{4}
\MaxTXIboard{4}{1}
\MaxTXIIboard{3}{0}

\end{scope}
\end{tikzpicture}}
\caption{A maximal \domineering position on a $5\times 6$ board and its equivalent tiling.}
\label{fig:maximal-position-tiling}
\end{figure}

Note that the number of Left (vertical) dominoes is equal to the number of $T_1$ and $T_2$ combined, and the number of Right (horizontal) dominoes is equal to the number of $T_7$ and $T_8$ combined.

In this case we will use $M$ in the name of the bar-state matrices to represent that we are counting \textit{maximal} positions only. Also note that, since the bottom and right edge in a tiling are allowed to have both 0 and 1 as their labels, the generating polynomial will be a sum of entries. A detailed explanation is given in the proof of the following theorem. 

\begin{theorem}\label{thm:MaxPositions}
The generating function for the maximal position of an $m \times n$ \domineering board is 
\[
F_{m,n}(x,y)=\mathlarger{\mathlarger{\mathlarger{\sum}}}_{u \in\{0,1\}^n} \Bigg(M_{0,n} + M_{0,n}'\Bigg)^m(1 + \sum_{i=1}^n u_i 3^i , 1)
\]
where $M_{0,0} = \begin{bmatrix} 1\\ \end{bmatrix}$,
$M_{1,0} = \begin{bmatrix} 0\\ \end{bmatrix}$,
$M_{2,0} = \begin{bmatrix} 0\\ \end{bmatrix}$, 
$M_{0,0}' = \begin{bmatrix} 0 \end{bmatrix}$,
$M_{1,0}' = \begin{bmatrix} 1 \end{bmatrix}$,
$M_{2,0}' = \begin{bmatrix} 0 \end{bmatrix}$,

\begin{center}
\resizebox{\textwidth}{!}{
\begin{minipage}{1.2\textwidth}
$M_{0,(q+1)} =
\left[\begin{tabular}{R{1cm}R{1cm}R{1.1cm}}
$M_{2,q}$ & $M_{2,q}$ & $xM_{0,q}$ \\
$M_{1,q}$ & \zeroblock & \zeroblock \\
$M_{0,q}$ & $M_{0,q}$ & \zeroblock
\end{tabular}\right]
$,\hspace{0.5cm}
$M_{0,(q+1)}' =
\left[\begin{tabular}{R{1cm}R{1cm}R{1.1cm}}
$M_{2,q}'$ & $M_{2,q}'$ & $xM_{0,q}'$ \\
$M_{1,q}'$ & \zeroblock & \zeroblock \\
$M_{0,q}'$ & $M_{0,q}'$ & \zeroblock
\end{tabular}\right],
$
\end{minipage}}

\bigskip

\resizebox{\textwidth}{!}{
\begin{minipage}{1.2\textwidth}
$M_{1,(q+1)} = 
\left[\begin{tabular}{R{1cm}R{1cm}R{1cm}}
$M_{2,q}$ & $M_{2,q}$ & $xM_{0,q}$ \\
\zeroblock & \zeroblock & \zeroblock \\
$M_{0,q}$ & $M_{0,q}$ & \zeroblock
\end{tabular}\right]
$,\hspace{0.5cm}
$M_{1,(q+1)}' = 
\left[\begin{tabular}{R{1cm}R{1cm}R{1cm}}
$M_{2,q}'$ & $M_{2,q}'$ & $xM_{0,q}'$ \\
\zeroblock & \zeroblock & \zeroblock \\
$M_{0,q}'$ & $M_{0,q}'$ & \zeroblock
\end{tabular}\right],
$
\end{minipage}}

\bigskip

\resizebox{\textwidth}{!}{
\begin{minipage}{1.2\textwidth}
$M_{2,(q+1)} =
\left[\begin{tabular}{R{1cm}R{1cm}R{1cm}}
$yM_{0,q}$ & $yM_{0,q}$ & \zeroblock \\
\zeroblock & \zeroblock & \zeroblock \\
\zeroblock & \zeroblock & \zeroblock
\end{tabular}\right],
$\text{ and }
$M_{2,(q+1)}' =
\left[\begin{tabular}{R{1cm}R{1cm}R{1cm}}
$yM_{0,q}'$ & $yM_{0,q}'$ & \zeroblock \\
\zeroblock & \zeroblock & \zeroblock \\
\zeroblock & \zeroblock & \zeroblock
\end{tabular}\right].
$
\end{minipage}}
\end{center}
\end{theorem}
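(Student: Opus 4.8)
The plan is to mirror the proof of \cref{thm:generalPositions} essentially line for line, the only structural differences being that edge labels now take three values instead of two—so the bar-state matrices are $3^q\times 3^q$ and their rows and columns are indexed by ternary strings in lexicographic order (the obvious analogue of the binary ordering used there)—and that the right and bottom boundary edges of a maximal tiling may carry label $0$ or $1$, which is what forces the outer sum over $u\in\{0,1\}^n$ and makes a single row correspond to $M_{0,n}+M_{0,n}'$ rather than to a single matrix.

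First I would state precisely the claim to be proved about the matrices: for $s\in\{0,1,2\}$, the $(r,c)$ entry of $M_{s,q}$ (respectively $M_{s,q}'$) is $\sum x^ay^b$ over all length-$q$ bar mosaics built from the tiles of \cref{fig:maximal-tiles} whose top-label string has index $c$, whose bottom-label string has index $r$, whose left label is $s$, and whose right label is $0$ (respectively $1$), where $a$ is the number of $T_1$ and $T_2$ tiles and $b$ is the number of $T_7$ and $T_8$ tiles. The base case $q=0$ is immediate, since a length-$0$ bar has left label equal to right label: with right label $0$ this gives $M_{0,0}=[1]$ and $M_{1,0}=M_{2,0}=[0]$, and with right label $1$ it gives $M_{1,0}'=[1]$ and $M_{0,0}'=M_{2,0}'=[0]$.

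For the inductive step I would peel the leftmost tile off a length-$(q+1)$ bar, exactly as in \cref{thm:generalPositions}. Each tile with left label $s$, bottom label $i$, top label $j$, right label $r$, and weight $\mu$ ($\mu=x$ for $T_1,T_2$; $\mu=y$ for $T_7,T_8$; $\mu=1$ otherwise) contributes $\mu\,M_{r,q}$ to the block of $M_{s,(q+1)}$ in block-row $i$ and block-column $j$ (block-rows and block-columns indexed by the tile's bottom and top labels $0,1,2$, extending the convention of \cref{fig:4blocklabels}), and likewise $\mu\,M_{r,q}'$ to $M_{s,(q+1)}'$: removing the leftmost tile turns its right label into the left label of the remaining length-$q$ bar and does not change the right label of the whole bar. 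Contributions of distinct tiles sharing the triple $(s,i,j)$ add, and a block with no admissible tile is $\zero$. Sorting the thirteen tiles of \cref{fig:maximal-tiles} by their $(s,i,j,r,\mu)$ data and collecting the contributions then reproduces the six displayed block matrices. I expect this tile-by-tile accounting to be the main obstacle—it is routine but the bookkeeping is delicate, and it is the one place where the specific three-label scheme is used, namely the scheme designed so that the adjacency and boundary conditions are equivalent to the position being maximal (no two adjacent empty squares, no unmatched domino halves).

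Finally I would build rectangles from bars as before: stacking rows makes the matching of one row's bottom labels with the next row's top labels into matrix multiplication with the bottom row's matrix on the left, so an $m$-row mosaic satisfying the left boundary condition is counted by the product of the $m$ row matrices. A single row may end with label $0$ or $1$, so its matrix is $M_{0,n}+M_{0,n}'$, giving $(M_{0,n}+M_{0,n}')^m$. Imposing the top boundary condition keeps only the all-zero top-label column, namely column $1$; imposing the bottom boundary condition keeps only the rows whose index string lies in $\{0,1\}^n$; summing the resulting entries over $u\in\{0,1\}^n$ (the string $u$ being the one that the displayed row index encodes) gives $F_{m,n}(x,y)$. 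The only subtlety here, absent from \cref{thm:generalPositions}, is that the bottom boundary is not pinned to $0$, and this is precisely what produces the outer summation.
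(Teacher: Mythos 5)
Your proposal is correct and matches the paper's own proof essentially step for step: the same ternary-indexed bar-state interpretation, the same peel-off-the-leftmost-tile recursion (the paper merely spells out the nine blocks and the admissible tiles explicitly, for the unprimed matrices, instead of your generic $(s,i,j,r,\mu)$ bookkeeping), and the same stacking argument giving $(M_{0,n}+M_{0,n}')^m$ with the top boundary selecting column $1$ and the bottom boundary selecting the rows indexed by strings in $\{0,1\}^n$, summed over $u$. No substantive difference or gap.
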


\begin{proof}
The proof will be along the same lines as the proof of \cref{thm:generalPositions}, thus some of the details will be omitted.

The bar-state matrices count bar mosaics of length $q$. The matrix $M_{k,q}$ counts those bars with starting label $k$ and ending label 0, while the matrix $M_{k,q}'$ counts those with starting label $k$ and ending label 1.

The labels will be ternary strings, ordered lexicographically. Thus, the strings of length $2$ are ordered as
\[00,01,02,10,11,12,20,21,22.\]
The column labels of the matrix again correspond to the top label of the bar mosaic, while the row label is the bottom label. Each bar is represented by a monomial $x^ay^b$ where $a$ is the total number of $T_1$ and $T_2$ in the bar and $b$ is the total number of $T_7$ and $T_8$ in the bar.

Each matrix has $9$ blocks, which we label using roman numerals as shown in \cref{fig:9blocklabels}, where we also show the corresponding top and bottom labels for the leftmost tile.

\begin{figure}[ht]
\[\begin{array}{cc}
	\mbox{\diagbox[width=7em]{bottom}{top}}	& \begin{array}{ccc} 0 \ \ & \ \ \ 1 \ \ & \ \ 2\end{array}\\[2ex]
	\begin{array}{c} 0\\ 1\\ 2\\ \end{array}		&\left[\begin{array}{ccc}
													I & II & III\\
													IV & V & VI\\
													VII & IIX & IX\\	
												\end{array}\right]\\
\end{array}\]
\caption{The block construction of the state-matrices for maximal \domineering positions.}
\label{fig:9blocklabels}
\end{figure} 

In $M_{0,q+1}$, we have a left label of $0$. The only tile that meets the conditions for block I (and $M_{0,q+1}$) is $T_9$. As the right edge label of $T_9$ is $2$, the possible completions of our bar are given by $M_{2,q}$ and so our block I entry is $M_{2,q}$.

The conditions for block II force $T_{10}$ as the starting tile and the entry is again $M_{2,q}$.

The conditions for block III force $T_1$ --- giving the entry $xM_{0,q}$ as $T_1$ has right label of $0$. For block IV, tile $T_0$ is the only possible starting tile, giving entry $M_{1,q}$. For blocks V and VI we would need a tile with bottom label $1$ and top label $1$ or $2$, respectively, and no such tiles exist so our entry is $\zero$.

The conditions for block VII are top label of $0$ and bottom label of $2$; the only tile that works is $T_3$ and hence the entry is $M_{0,q}$. The conditions for block VIII are similar but with top label $1$ so $T_4$ is the only tile that works. 

The conditions for block IX are top label of $2$ and bottom label of $2$ which never occurs and thus our entry is $\zero$.

In $M_{1,q+1}$, we have a starting (left) label of $1$. This forces $T_{11}$ in block I; $T_{12}$ in block II; $T_2$ in block III; $T_5$ in block VII; and $T_6$ in block VIII. The remaining blocks have no possible tiles. Thus our entries for the matrix are as above.	

In $M_{2,q+1}$, we have a starting (left) label of $2$. Thus the only possible tiles are $T_7$ and $T_8$. In block I the only possible tile is $T_7$, giving $yM_{0,q}$. In block II the tile is $T_8$, giving $yM_{0,q}$. All other blocks have no possible tiles, so are $\zero$.

Note that the arguments for $M_{0,q+1}'$, $M_{1,q+1}'$, and $M_{2,q+1}'$ are similar as the possible left-most tile is identical and only the ending label changes, thus we skip these.

Now, restricting to tilings, we will only consider the bar mosaics of length $n$ with left label 0 and right labels 0 or 1, so $M_{0,n}+M_{0,n}'$. As in the general positions case, stacking the bar mosaics corresponds to matrix multiplication. Thus the mosaics with all left labels 0 and right labels 0 or 1 are enumerated in $(M_{0,n}+M_{0,n}')^m$. Finally, we need to restrict to top labels being all 0 and bottom labels 0 or 1. This means we need to sum the entries in the first column which are in rows numbered $\displaystyle 1+\sum_{i=1}^n u_i 3^i$  where the $u_i$ are all 0 or 1 (the ternary expansion of the row number$-1$ has no 2s), giving our result.
\end{proof}

\begin{example}
Using the recursion we find that
\[M_{0,2}=\left[\begin{array}{ccc:ccc:ccc}
y & y & 0 & y & y & 0 & 0 & 0 & x^2 \\
0 & 0 & 0 & 0 & 0 & 0 & 0 & 0 & 0 \\
0 & 0 & 0 & 0 & 0 & 0 & x & x & 0 \\
\hdashline
0 & 0 & x & 0 & 0 & 0 & 0 & 0 & 0 \\
0 & 0 & 0 & 0 & 0 & 0 & 0 & 0 & 0 \\
1 & 1 & 0 & 0 & 0 & 0 & 0 & 0 & 0 \\
\hdashline
0 & 0 & x & 0 & 0 & x & 0 & 0 & 0 \\
0 & 0 & 0 & 0 & 0 & 0 & 0 & 0 & 0 \\
1 & 1 & 0 & 1 & 1 & 0 & 0 & 0 & 0
\end{array}\right].\]

Taking the third power and summing the relevant entries, we get that
\[F_{3,2}(x,y)= 2x^2y + 2x^2 + y^3.\]
\end{example}

\begin{remark}
As for the general positions case, we can use these matrices to find the generating polynomial for non-rectangular boards. As the technique is the same, we do not demonstrate it here.
\end{remark}

\section{Counting Left and Right ends}\label{sec:ends}

Similar to maximal positions, we are also able to count Left or Right ends by choosing the tiles appropriately so that the adjacency condition alone forces such an end. We will just be counting the Right ends in this section as it results in bar-state matrices of size $2^q\times 2^q$, while the Left ends, using a similar argument, would need bar-state matrices of size $3^q\times 3^q$ (although fewer matrices). Alternatively, the generating polynomial for Left ends on an $m\times n$ board can be found using the generating polynomial of Right ends on an $n\times m$ board by switching $x$ and $y$. Further, if we would like to find the number of Left or Right ends, we add the number of positions of each type and then subtract the number of maximal positions as these are the positions that are Left and Right ends.

For Right ends, we may have two empty squares vertically adjacent as Left may potentially still play a domino. But we must not have two empty squares horizontally adjacent as Right must have no moves. To achieve this using the adjacency condition, we use the tiles  given in \cref{fig:end-tiles}.
\begin{figure}[!ht]
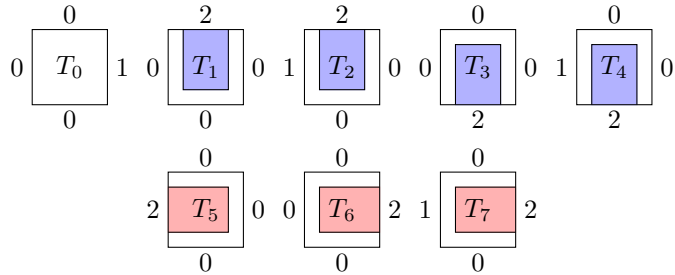

\begin{center}
\ReTO \ReTI \ReTII \ReTIII \ReTIV 

\ReTV \ReTVI \ReTVII
\end{center}
\caption{The tiles used to count Right ends in \domineering.}
\label{fig:end-tiles}
\end{figure}

The two conditions for a tiling to be a \domineering Right end are as follows:
\begin{enumerate}
\item \textit{Adjacency condition}: All shared edges of adjacent tiles have the same label.
\item \textit{Boundary condition}: The left, top, and bottom edges of the tiling have labels 0; the right edge has labels 0 or 1.
\end{enumerate}

\cref{fig:end-tiling} contains an example of a position and the corresponding tiling.
\begin{figure}
\resizebox{\textwidth}{!}{
\begin{tikzpicture}
\tikzchessboard[4]
\Ldomino{1}{1}
\Ldomino{1}{3}
\Rdomino{1}{0}
\Ldomino{3}{1}
\Rdomino{2}{3}
\Rdomino{2}{4}
\Ldomino{4}{0}

\begin{scope}[shift={(7,0)}]
\ReTOboard{0}{0}
\ReTOboard{0}{1}
\ReTOboard{0}{2}
\ReTOboard{0}{3}
\ReTOboard{0}{4}
\ReTOboard{2}{1}
\ReTOboard{2}{2}
\ReTOboard{3}{0}
\ReTOboard{4}{2}
\ReTOboard{4}{3}
\ReTOboard{4}{4}
\ReTIIboard{1}{1}
\ReTIIboard{1}{3}
\ReTIIboard{3}{1}
\ReTIIboard{4}{0}
\ReTIIIboard{4}{1}
\ReTIVboard{1}{2}
\ReTIVboard{1}{4}
\ReTIVboard{3}{2}
\ReTVboard{2}{0}
\ReTVboard{3}{3}
\ReTVboard{3}{4}
\ReTVIboard{2}{3}
\ReTVIboard{2}{4}
\ReTVIIboard{1}{0}
\end{scope}
\end{tikzpicture}}
\caption{A Right end on a $5\times 5$ board in \domineering and its equivalent tiling.}
\label{fig:end-tiling}
\end{figure}
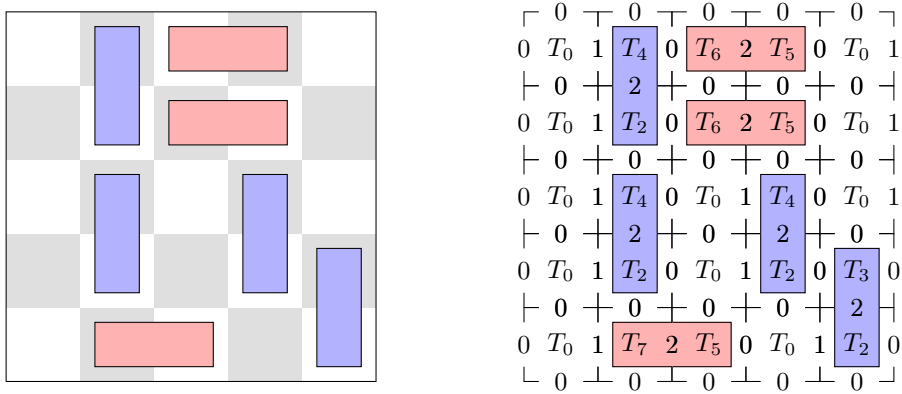

We use $RE$ for naming the bar-state matrices. The matrix $RE_{k,q}$ counts those bar mosaics of length $q$ with starting label $k$ and ending label $0$, while the matrix $RE_{k,q}'$ counts those with starting label $k$ and ending label $1$.

\begin{theorem}
The generating polynomial of \domineering Right ends on an $m\times n$ board is the (1,1) entry of $(RE_{0,n}+RE_{0,n}')^m$ where $RE_{0,0} = \begin{bmatrix} 1\\ \end{bmatrix}$,
$RE_{1,0} = \begin{bmatrix} 0\\ \end{bmatrix}$, $RE_{2,0} = \begin{bmatrix} 0\\ \end{bmatrix}$, $RE_{0,0}' = \begin{bmatrix} 0\\ \end{bmatrix}$,
$RE_{1,0}' = \begin{bmatrix} 1\\ \end{bmatrix}$, $RE_{2,0}' = \begin{bmatrix} 0\\ \end{bmatrix}$,
\begin{center}
\resizebox{\textwidth}{!}{
\begin{minipage}{1.1\textwidth}
$RE_{0,q+1} =
\left[\begin{tabular}{R{1.25cm}R{1.25cm}}
$RE_{1,q}$ $+RE_{2,q}$ & $xRE_{0,q}$ \\
$RE_{0,q}$ & \zeroblock
\end{tabular}\right],
$\hspace{0.75cm}
$RE_{0,q+1}' =
\left[\begin{tabular}{R{1.25cm}R{1.25cm}}
$RE_{1,q}'$ $+RE_{2,q}'$ & $xRE_{0,q}'$ \\
$RE_{0,q}'$ & \zeroblock
\end{tabular}\right],
$
\end{minipage}}

\bigskip
\resizebox{\textwidth}{!}{
\begin{minipage}{1.1\textwidth}
$RE_{1,q+1} =
\left[\begin{tabular}{R{1.25cm}R{1.25cm}}
$RE_{2,q}$ & $xRE_{0,q}$ \\
$RE_{0,q}$ & \zeroblock
\end{tabular}\right],
$\hspace{0.75cm}
$RE_{1,q+1}' =
\left[\begin{tabular}{R{1.25cm}R{1.25cm}}
$RE_{2,q}'$ & $xRE_{0,q}'$ \\
$RE_{0,q}'$ & \zeroblock
\end{tabular}\right],
$
\end{minipage}}

\bigskip
\resizebox{\textwidth}{!}{
\begin{minipage}{1.1\textwidth}
$RE_{2,q+1} =
\left[\begin{tabular}{R{1.25cm}R{1.25cm}}
$yRE_{0,q}$ & \zeroblock \\
\zeroblock & \zeroblock
\end{tabular}\right],
$\text{ and }
$RE_{2,q+1}' =
\left[\begin{tabular}{R{1.25cm}R{1.25cm}}
$yRE_{0,q}'$ & \zeroblock \\
\zeroblock & \zeroblock
\end{tabular}\right].
$
\end{minipage}}
\end{center}
\end{theorem}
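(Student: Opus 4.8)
The plan is to follow the template established in the proofs of \cref{thm:generalPositions} and \cref{thm:MaxPositions}, with the one new feature that the tiles of \cref{fig:end-tiles} are designed so that the adjacency condition forbids two horizontally adjacent empty squares but permits two vertically adjacent ones. First I would fix the meaning of the bar-state matrices: each of $RE_{k,q}$ and $RE_{k,q}'$ is $2^q\times 2^q$, with rows indexed by the length-$q$ $\{0,1\}$-strings of bottom edge labels and columns by the length-$q$ $\{0,1\}$-strings of top edge labels, both in lexicographic order, so that the $(s,t)$ entry is $\sum x^ay^b$ over all length-$q$ bar mosaics built from \cref{fig:end-tiles} with bottom labels $s$, top labels $t$, left vertical edge labelled $k\in\{0,1,2\}$, and right vertical edge labelled $0$ (unprimed) or $1$ (primed); here $a$ and $b$ count the vertical- and horizontal-domino tiles of the fixed types, as in the earlier proofs. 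The key structural observation, which I would state up front, is that the horizontal (vertical-edge) labels need three values, exactly as in the maximal case, in order to track the ``empty-to-the-left'' state that rules out horizontal empty pairs, whereas the vertical (horizontal-edge) labels need only the values $0,1$ (no half-domino / vertical half-domino), since vertical empty pairs are allowed; this is precisely why we get three left-label matrices per right-boundary value but only $2^q\times 2^q$ matrices and a $2\times 2$ block structure as in \cref{fig:4blocklabels}.

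Next I would verify the recursion. The base case $q=0$ is immediate: a length-$0$ bar has no tiles and coinciding left and right edges, forcing $RE_{k,0}=[1]$ iff $k=0$ and $RE_{k,0}'=[1]$ iff $k=1$, all others $[0]$, matching the stated $1\times 1$ matrices. For the inductive step I would peel off the leftmost tile of a length-$(q+1)$ bar: its left vertical label must equal the first subscript; its (binary) top and bottom labels determine which of the four blocks of \cref{fig:4blocklabels} the contribution lands in; its right vertical label becomes the left label of the remaining length-$q$ bar, so the completions are enumerated by $RE_{0,q}$, $RE_{1,q}$ or $RE_{2,q}$ (resp.\ the primed versions) according as that inherited label is $0$, $1$ or $2$; and the tile contributes a factor $x$, $y$, or $1$ according to its type. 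Reading the admissible leftmost tiles off \cref{fig:end-tiles} then reproduces all six (twelve, with primes) displayed block matrices --- for instance, the two left-label-$0$ tiles with top $=$ bottom $=0$, one ending in label $1$ and one in label $2$, give block I of $RE_{0,q+1}$ as $RE_{1,q}+RE_{2,q}$; the forced vertical-top tile gives block II as $xRE_{0,q}$; the forced vertical-bottom tile gives block III as $RE_{0,q}$; no tile has top $=$ bottom $=1$, so block IV is $\zero$; and similarly for $RE_{1,\cdot}$, $RE_{2,\cdot}$, and the primed matrices.

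Finally I would assemble rectangles exactly as before: stacking a bar with bottom labels $s$ onto a bar with top labels $s$ corresponds to multiplying the two bar-state matrices (the lexicographic indexing of rows and columns was chosen so that this is compatible with the block recursion), so $(RE_{0,n}+RE_{0,n}')^m$ is the state matrix for $m\times n$ mosaics meeting the left and right boundary conditions (left edge $0$, right edge $0$ or $1$). Imposing the remaining boundary condition --- all top and bottom edge labels $0$ --- amounts to restricting to the top-left block $m$ times, i.e.\ taking the $(1,1)$ entry, which is then the generating polynomial of $m\times n$ \domineering Right ends. Unlike \cref{thm:MaxPositions}, no outer sum over $u\in\{0,1\}^n$ is needed here because the top and bottom edges must be $0$ rather than $0$ or $1$.

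I expect the only real obstacle to be bookkeeping rather than ideas: one must confirm, tile by tile from \cref{fig:end-tiles}, both that the adjacency and boundary conditions force exactly the Right-end positions (arbitrary vertical domino/empty structure, but no two horizontally adjacent empty squares) and that every Right end corresponds to a unique such tiling, and then carefully transcribe which tile occupies which block in each of the twelve recursively-defined matrices. A small point worth stating explicitly is that the inherited right label of the leftmost tile can legitimately be $2$ on an internal edge --- which is what makes $RE_{2,q}$ and $RE_{2,q}'$ appear inside $RE_{0,q+1}$ and $RE_{1,q+1}$ --- but never on the right boundary, which is why only $RE_{0,n}+RE_{0,n}'$ (and never $RE_{2,n}$) enters the final formula.
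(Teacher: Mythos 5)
Your proposal is correct and is essentially the argument the paper intends: the paper omits the proof, stating only that it is a combination of the arguments for \cref{thm:generalPositions} and \cref{thm:MaxPositions}, and your write-up carries out exactly that combination (binary horizontal-edge labels, ternary vertical-edge labels tracking empty-to-the-left and horizontal-domino crossings, peeling the leftmost tile for the block recursion, and summing the unprimed and primed matrices before taking the $(1,1)$ entry). The details you defer as bookkeeping (tile-by-tile verification and uniqueness of the tiling) are at the same level of detail the paper itself leaves implicit.
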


We give no proof here as the argument is simply a combination of the arguments for the general and maximal positions.

In \cref{tab:number-of-ends} we give the total number of Right ends on an $m\times n$ board (the table for Left ends is the transpose of \cref{tab:number-of-ends}). \cref{tab:number-of-ends} is in the OEIS, read by antidiagonals, as \seqnum{A332862}.

\begin{table}[!ht]
\centering
\begin{tabular}{c||r|r|r|r|r}
\mbox{\diagbox[width=3em]{$m$}{$n$}} & 1 & 2 & 3 & 4 & 5\\\hline\hline
1 & 1 & 1 & 2 & 2 & 3 \\\hline 
2 & 2 & 4 & 11 & 25 & 61 \\\hline
3 & 3 & 9 & 48 & 172 & 731 \\\hline
4 & 5 & 25 & 227 & 1,427 & 10,388 \\\hline
5 & 8 & 64 & 1,054 & 11,134 & 140,555 \\\hline
6 & 13 & 169 & 4,921 & 88,733 & 1,932,067 \\\hline
7 & 21 & 441 & 22,944 & 701,926 & 26,425,981 \\\hline
8 & 34 & 1,156 & 107,017 & 5,567,467 & 362,036,629 
\end{tabular}
\vspace{1em}

\begin{tabular}{c||r|r|r}
\mbox{\diagbox[width=3em]{$m$}{$n$}} & 6 & 7 & 8\\\hline\hline
1 & 4 & 5 & 7\\\hline
2 & 146 & 351 & 844\\\hline
3 & 2,976 & 12,039 & 49,401\\\hline
4 & 72,751 & 510,779 & 3,604,887\\\hline
5 & 1,693,116 & 20,414,525 & 248,119,648\\\hline
6 & 40,008,789 & 831,347,033 & 17,385,222,733\\\hline
7 & 941,088,936 & 33,656,587,715 & 1,211,649,519,869\\\hline
8 & 22,168,654,178 & 1,365,206,879,940 & 84,588,476,099,284
\end{tabular}

\caption{The number of Right ends in \domineering on an $m\times n$ board.}
\label{tab:number-of-ends}
\end{table}

The number of Right ends on a $1\times n$ board is recursively given by $a_1=1$, $a_2=1$, $a_3=2$, $a_n=a_{n-2}+a_{n-3}$ (the Padovan sequence shifted, OEIS sequence \seqnum{A000931}). Consider the leftmost square, if the square contains a domino, then the strip to the right of the domino can contain any Right end of length $n-2$. If the square is empty, then a domino has to be immediately to the right of it, and the remaining strip of length $n-3$ can contain any Right end.

In the positions of shape $m\times 1$ Right cannot play, thus all positions are Right ends. The sequence of these numbers is the Fibonacci sequence (OEIS sequence \seqnum{A000045}) as every position is a combination of empty squares and Left dominoes.

It appears that the number of Right ends on an $m\times 2$ board is given by the squared Fibonacci numbers (OEIS sequence \seqnum{A007598}) and the number of positions on an $m\times 3$ is given by the recursion $a_m=4a_{m-1}+4a_{m-2}-4a_{m-3}-a_{m-4}$ (OEIS sequence \seqnum{A054894}).

The main diagonal of \cref{tab:number-of-ends}, the number of Right ends on an $n \times n$ board, is in the OEIS as \seqnum{A332865}. Other sequences for the number of Right ends do not appear in the OEIS at this point.

\section{Further work} \label{sec:FurtherWork}

We suggest three broad avenues for continuing the work in this paper. One is to explore our original motivation and connect the enumeration of \domineering to questions of the algebraic structure of placement game positions. Secondly, we expect the methods demonstrated in this paper to be useful for other placement games played on grid-like boards (such as Cartesian products of cycles and paths). Lastly, there is the possibility of counting positions to analyze particular game situations either in theory or practice to develop better play strategies.

\section{Acknowledgments}

The authors thank Mount Allison University in Sackville NB, Canada. This research project started while both authors were members of the Department of Mathematics and Computer Science.

The first author's research was supported in part by the Natural Sciences and Engineering Research Council of Canada (funding reference number PDF-516619-2018).

\bibliographystyle{plain}

\bigskip
\hrule
\bigskip

\noindent 2010 {\it Mathematics Subject Classification}:
Primary 91A46; Secondary 05C30 and 05C57. 

\noindent \emph{Keywords:} combinatorial game, \Domineering, enumeration, matchings, grid graph.

\bigskip
\hrule
\bigskip

\end{document}